\font\Bigmath=cmsy10 scaled \magstep2
\font\bigmath=cmsy10 scaled \magstep1
\def\diamondplus{\mathrel{%
  \ooalign{\raise.29ex\hbox{$\scriptscriptstyle+$}\cr\hss$\diamond$\hss}}}
\def\diamondplustwo{\mathrel{%
  \ooalign{$+$\cr\hss\lower.255ex\hbox{\Bigmath\char5}\hss}}}
\def\diamondplusthree{\mathrel{%
  \ooalign{$\scriptstyle+$\cr\hss\lower.29ex\hbox{\bigmath\char5}\hss}}}
\renewcommand{\Tr}{\operatorname{Tr}}
\renewcommand{\tr}{\operatorname{tr}}
\newcommand{\diag}{\operatorname{diag}}
\newcommand{\be}{\begin{equation}}
\newcommand{\ee}{\end{equation}}
\newcommand{\R}{\mathbb R}
\newcommand{\B}{\mathbb B}
\newcommand{\C}{\mathbb C}
\def\GL{\mathop{\rm GL}\nolimits}
\def\Sym{\mathop{\rm Sym}\nolimits}
\def\Spec{\mathop{\rm Spec}\nolimits}
\def\gl{\mathop{\rm gl}\nolimits}
\newtheorem{Theorem}{Theorem}
\newtheorem{Lemma}{Lemma}
\newtheorem{Proposition}{Proposition}
\newtheorem{Corollary}{Corollary}
\newtheorem{Definition}{Definition}
\newtheorem{Remark}{Remark}
\newtheorem{Example}{Example}
\newtheorem{example}{Example}
\begin{document}
%%%%%%%%%%%%%%
\title{Guardian maps for continuous-time systems:\\ A Lie-algebraic approach\thanks{Research supported in part by the Israel Science Foundation.}} %% Article title

%% use optional labels to link authors explicitly to addresses:
%% \author[label1,label2]{}
%% \affiliation[label1]{organization={},
%%             addressline={},
%%             city={},
%%             postcode={},
%%             state={},
%%             country={}}
%%
%% \affiliation[label2]{organization={},
%%             addressline={},
%%             city={},
%%             postcode={},
%%             state={},
%%             country={}}

\author{Eyal Bar-Shalom,   Alexander Ovseevich and Michael Margaliot\footnote{The authors are with the School of ECE, Tel Aviv University, Israel 69978. Correspondence: michaelm@tauex.tau.ac.il}} %% Author name

\maketitle

%% Author affiliation
\iffalse
\affiliation{organization={},%Department and Organization
            addressline={},
            city={},
            postcode={},
            state={},
            country={}}
\fi

%% Abstract

\begin{abstract}
%%%%%%%%%%%%%%%%%%%%%%%%%%%%%%%
Guardian maps  are scalar maps  that vanish when a  matrix or polynomial is on the verge of stability.
  Several guardian maps have been proposed in the literature for Hurwitz stability based on the Kronecker sum, the second lower Schl\"aflian
 matrix, and the  bialternate sum. It is natural to ask if there is a unifying principle for  all these
  maps. Here, we introduce the Lie-algebraic notion of a guardian representation, and show that all the examples above are instances of this unifying idea. 
  We also show that the bialternate sum coincides with the second additive compound.
 % This framework naturally suggests the second additive compound as another guardian map for Hurwitz stability, but we show that the bialternate sum coincides with the second additive compound.
%%%%
\end{abstract}

\begin{IEEEkeywords}
    Compound matrices, Lie algebra  representations, 
    robust stability. 
\end{IEEEkeywords}
%\begin{center}
%\begin{IEEEMSC}
%15B30, 93D09.
%\end{IEEEMSC}
%\end{center}

%%Graphical abstract
%% Keywords

%\end{frontmatter}

%% Add \usepackage{lineno} before \begin{document} and uncomment
%% following line to enable line numbers
%% \linenumbers

%% main text
%%

%% Use \section commands to start a section
%%%%%%%%%%%%%%%%%%%%%%%%%%%%%%%
\section{Introduction}\label{sec1}
 %%%%%%%%%%%5
Guardian  maps are
used to analyze   the stability of  parameter-dependent
matrices
or polynomials
that are typically
associated with dynamic systems~\cite{Guardian1990}\cite[Ch. 17]{Barmish_book}. They have found numerous theoretical and practical applications in 
robust stability analysis of singularly perturbed   systems~\cite{SAYDY1996807}, interval 
systems~\cite{Chen01122012}, parameter dependent
LTI systems~\cite{Rern_et_al1994,Zhang01092006,MUSTAFA1995,vicono1990}, as well as for 
control synthesis in numerous applications (see e.g.~\cite{MUSTAFA1994,Bouazizi_PID_Guardian,launch_via_guardian,PowerSyst_GuardMaps_Ma,CHIHABI2021370}).

Guardian maps can be used to analyze the (generalized)
 stability   of a matrix  
 which entails that its eigenvalues
  lie in some
prescribed domain
of the complex plane. The classical  case is where the domain  of interest is
the open left-half complex plane  (corresponding to continuous-time stability) or
the open unit disk (discrete-time stability). 
%%%%%%%%%%

A   guardian
map  for continuous-time stability is a mapping~$f:\R^{n\times n} \to \R$
  such that  
\be\label{eq:defn}
f(A)=\begin{cases}
\not = 0, & \text{ if } \Re(\lambda_i(A))< 0 \text{ for all } i, \\
=0,& \text{ if } \Re(\lambda_i(A))= 0 \text{ for some } i . 
\end{cases}
\ee
Thus, $f$ provides  an explicit characterization  for the boundary of the set of Hurwitz  stable matrices. 
It is possible to require a weaker condition on the guardian map, namely, that it satisfies 
%%%%
\be\label{eq:defm}
g(A)=\begin{cases}
\not = 0, & \text{ if } \Re(\lambda_i(A))< 0 \text{ for all } i, \\
=0,& \text{ if } A \text{ admits an eigenvalue } i\beta,\; \beta\not =0. 
\end{cases}
\ee
Indeed, 
in this case~$f(A):=\det(A)g(A)$ satisfies~\eqref{eq:defn}, as~$\det(A)$ becomes zero when~$A$ admits a zero eigenvalue.

To demonstrate   one possible
guardian map that satisfies~\eqref{eq:defn} recall that for a matrix~$A\in\R^{n\times n}$  the Kronecker sum of~$A$ with itself
is~$A\oplus A:= A  \otimes I_n+I_n \otimes A$,
where~$\otimes$  
stands for the Kronecker 
product~\cite[Chapter~4]{Horn1991TopicsMatrixAna},
and~$I_n$ is the $n\times n$ identity matrix.
  Mustafa~\cite{MUSTAFA1994} used the guardian map
\[
f_1(A) :=
\det( A  \oplus A )
\]
to derive an elegant formula ensuring   the stability of a closed-loop system composed of a linear system and an integral controller.
Recall that the eigenvalues of~$A\oplus A $ are
the~$n^2$ sums
\[
\lambda_i(A)+\lambda_j(A),\quad i,j\in\{1,\dots,n\}.
\]
In particular, if~$\Re(\lambda_i(A))<0$ for all~$i$ then the same property holds for the eigenvalues of~$A\oplus A$, and if~$A$ has a zero  eigenvalue  or an eigenvalue~$i\beta$, $\beta\not =0$, 
then~$A\oplus A$ has a zero eigenvalue.
This implies that~$f_1$
satisfies~\eqref{eq:defn}.
However, note that~$f_1$ requires the
         computation of the
         determinant of an~$n^2\times n^2$ matrix. Another difficulty associated with~$f_1$ is that if~$A$ admits a block structure (e.g., when studying the stability of a feedback system or a singularly perturbed system) the Kronecker sum destroys
         the structure.
The latter problem can be addressed for structured matrices by using the 
block Kronecker sum (see e.g.~\cite{MUSTAFA010}). 

Several other guardian 
maps that satisfy~\eqref{eq:defn} or~\eqref{eq:defm}  
have been suggested.
These are based on 
the second lower Schl\"aflian
 matrix of~$A$ (see e.g.~\cite{Guardian1990}),
and 
the bialternate sum of~$A$ with itself (see e.g.~\cite{MUSTAFA1995}). 
%
%
\begin{comment}
    has   eigenvalues
\[
\lambda_i(A)+\lambda_j(A),\quad i,j\in\{1,\dots,n\} \text{ with } i\not = j ,
\]
so it can also be used to define a guardian map~\cite{Guardian1990},
yet the  dimensions of~$A_{[2]}$ are~$\frac{n(n+1)}{2} \times \frac{n(n+1)}{2} $, which is an improvement with respect to the dimensions of~$A\oplus A$.

\end{comment}
%
%
\begin{comment}
    
Later Mustafa with coauthors \cite{MUSTAFA1995} studied many other guardian maps of the form $A\mapsto\det\rho(A)$ similar to the above Kronecker double, like the ``Lyapunov double'' $\rho(A):\Sym_n\to\Sym_n$ given by $\rho(A)X= A^TX+XA$, where $X$ runs over space $\Sym_n$ of symmetric matrices on $\R^n$.

\end{comment}
%
%
A natural question is why   these different 
algebraic structures are all 
  suitable  for guardian maps?

  Here, we introduce the new Lie-algebraic 
notion of a \emph{guardian representation}  and show that all 
 known  examples of guardian maps for the set of Hurwitz matrices   are a particular 
 case of this unifying  concept. 
This new notion also suggests that a natural algebraic structure for a guardian map is the second additive compound  of a matrix~\cite{ofir_suff_comp}. However, it turns out that this is not really a new idea, as we  show that the bialternate sum of~$A$ with itself
is just  the second additive compound of~$A$. 
%%%
The main contribution of this work lies in identifying and proving new
linear--algebraic identities among well--known matrix constructions,
thereby unifying them under a common Lie-algebraic framework.

The remainder of this note  is organized as follows. The next section quickly reviews  several topics   that are used later on.
Section~\ref{sec:main}
introduces  the new notion of a guardian representation, and then    shows that all previous algebraic structures used to define a guardian map for Hurwitz matrices (that is, 
the Kronecker product, the second lower Schl\"aflian
 matrix, and 
the bialternate sum)
are all a special case of  this unifying notion. 
There is another important relation between these algebraic notions, based on   a matrix ODE, and for 
 the sake of completeness, we include it in the Appendix.

We   use standard notation. Vectors [matrices] are denoted by small [capital] letters. 
For a matrix~$A$, $A^\top$ denotes the transpose of~$A$.
The eigenvalues of a matrix~$A\in\R^{n\times n}$ are denoted~$\lambda_1(A),\dots,\lambda_n(A)$. 
%%%%%%
In the context of  Lie groups and Lie algebras
we  sometimes also denote matrices  by small letters, as this  is the standard notation in this field.

\section{Preliminaries}

This section  briefly reviews  known results on compound matrices, 
 Lie groups, Lie algebras and representations 
 that are used later on.
 
%%%%%%%%%%%%%%%%%%%%%%%%%%%
\subsection{Compound Matrices}
%%%%%%%%%%%%%%%%%%%%%%%%%%%
Compound matrices are a tool from
multilinear algebra that has  recently found many applications in systems and control theory~\cite{comp_long_tutorial}, 
in particular in the context 
of~$k$-contraction~\cite{kordercont,Angeli2025smallgain,ofir2021sufficient,Dalin2022Duality_kcont,comp_via_kronecker},
$\alpha$-contraction~\cite{wu2020generalization},
and~$k$-cooperative systems~\cite{Eyal_k_posi,KATZ2025113651}. Here we  review their spectral properties.

%%%%%%%%%%%%%%%%%%%%%%%%%%%
%%%%%%%%%%%%%%%%%%%%%%%%%%%

%%%%%%%%%%%%%%%%%%%%%%%%%%%
\begin{Definition}\label{def:MC}
%%%%%%%%%%%%%%%%%%%%%%%%%%%
%
Let~$A\in\R^{n\times m}$. Fix~$k\in \{1,\dots,\min\{n,m\}  \}$. The~$k$-multiplicative compound matrix of~$A$, denoted~$A^{(k)}$, is the~$\binom{n}{k}\times\binom{m}{k}$ matrix which consists of all the~$k$-minors of~$A$,
ordered lexicographically.
%
%%%%%%%%%%%%%%%%%%%%%%%%%%%
\end{Definition}
%%%%%%%%%%%%%%%%%%%%%%%%%%%
%
%%%
%

For example, if $n=2,m=3$ i.e.,
$
A=\begin{bmatrix}
    a_{11} & a_{12} & a_{13} \\
    a_{21} & a_{22} & a_{23} \\
\end{bmatrix}
$
then~$A^{(2)}$ is the $1\times 3$ matrix: 
\[
A^{(2)}=\begin{bmatrix}
   \det  \begin{bmatrix}
   a_{11}& a_{12}\\
   a_{21}&a_{22}
\end{bmatrix}
&
%%%
 \det  \begin{bmatrix}
   a_{11}& a_{13}\\
   a_{21}&a_{23}
\end{bmatrix}
&
%%%
\det  \begin{bmatrix}
   a_{12}& a_{13}\\
   a_{22}&a_{23}
\end{bmatrix}
%%%%%%%
\end{bmatrix}.
\]

Definition~\ref{def:MC}
implies that
$A^{(1)}=A$,  and if~$A\in\R^{n\times n}$ then~$A^{(n)}=\det (A)$.
In addition,~$(A^{(k)})^\top = (A^\top)^{(k)}$. 

 %%%%%%
If~$D\in\R^{n\times n}$ is diagonal with eigenvalues $\lambda_1,\dots,\lambda_n$, then Definition~\ref{def:MC} implies that~$D^{(k)}$ is also diagonal, with
$\binom{n}{k}$
eigenvalues
$\prod_{i=1}^{k} \lambda_i, (\prod_{i=1}^{k-1}\lambda_i) \lambda_{k+1},\dots,
\prod_{i=n-k+1}^{n} \lambda_i$.
In particular,
the $k$-multiplicative compound of the~$n\times n$ identity matrix is~$(I_n)^{(k)}:=I_r$, with~$r:=\binom{n}{k}$.

More generally,
the~$k$-multiplicative compound of a square matrix~$A\in\R^{n\times n}$ has a ``multiplicative''  spectral property. 
If $A$ has eigenvalues 
$ \lambda_1,\dots,\lambda_n$
then the~$\binom{n}{k}$ eigenvalues of~$A^{(k)}$ are the products:
\[
\lambda_{i_1}\dots\lambda_{i_k}, \quad 1\leq i_1<\dots< i_k\leq n.
\]

The term ``multiplicative compound'' is justified by the
Cauchy-Binet theorem which asserts that for any~$A\in\R^{n\times m}$, $B\in\R^{m\times p}$ and~$k\in
\{1,\dots,\min\{n,m,p\}\}$, we have 
\be\label{eq:cbin}
(AB)^{(k)}=A^{(k)} B^{(k)}.
\ee
For~$A,B\in\R^{n\times n}$ and~$k=n$,~\eqref{eq:cbin} reduces to~$\det(AB)=\det(A)\det(B)$.

%
%%%%%%%%%%%%%%%%%%%%%%%%%%%%%%%%%%%%%%%%%%%%%%%%%%%%%%%%%
%

%%%%%%%%%%%%%%%%%%%%%%%%%%%
%%%%%%%%%%%%%%%%%%%%%%%%%%%

%%%%%%%%%%%%%%%%%%%%%%%%%%%
\begin{Definition}\label{def:AC}
%%%%%%%%%%%%%%%%%%%%%%%%%%%
%
Let $A \in \C^{n\times n}$. The~$k$-additive compound matrix
of $A$, denoted $A^{[k]}$, is the $\binom{n}{k} \times \binom{n}{k}$
matrix defined by
\begin{align*}
    A^{[k]} :=
    \frac{d}{d\varepsilon}
    (I_n + \varepsilon A)^{(k)} \mid_{\varepsilon = 0}.
\end{align*}
%
%%%%%%%%%%%%%%%%%%%%%%%%%%%
\end{Definition}
%%%%%%%%%%%%%%%%%%%%%%%%%%%
%
This  implies that~$A^{[1]}=A$ and~$A^{[n]}=\tr(A)$.
%%%%%
The term additive compound is justified by the fact that
  for any~$A,B\in\R^{n\times n}$ and any~$k\in\{1,\dots,n\}$, we have  $(A + B)^{[k]} = A^{[k]} + B^{[k]}$.
%
%%%%
%

The~$k$-additive compound  satisfies an ``additive''  spectral property:
If~$A\in\R^{n\times n}$ has  eigenvalues~$ \lambda_1,\dots,\lambda_n$ 
then the $\binom{n}{k}$ eigenvalues 
of~$A^{[k]}$  are the sums 
\[
\lambda_{i_1}+\dots+\lambda_{i_k}, \quad 1\leq i_1<\dots< i_k\leq n.
\]

%%%%%%%%%%%%%%%%%%%%%%%%%
\subsection{Lie groups, Lie algebras and representations}
%%%%%%%%%%%%%%%%%%%%%%%
Let~$\text{GL}(n, \mathbb{R})$
denote  the set of
$n \times n$ invertible matrices with real entries.
We consider only Lie groups of (real) matrices~\cite{Hall_book_Lie_algebras}, that is,  sets of invertible matrices that are closed under inversion and multiplication, and possess a compatible structure of a smooth manifold.

Associated with each Lie group is its Lie algebra, which is %can be thought of as
the tangent space to the Lie group at the identity element~$I_n$,  equipped with the Lie bracket~$[\cdot,\cdot]:\R^{n\times n}\times \R^{n\times n}\to\R^{n\times n}$ defined by
\be\label{eq:lie_bracket}
[A,B]:= AB-BA,
\ee
i.e., the commutator of~$A$ and~$B$.
 The Lie algebra captures the ``first-order infinitesimal'' structure of the Lie group, and it   describes completely
 how the group looks locally near the identity.

Let
  $\gl(n,\R)$ denote  the
Lie algebra   of all real $n\times n$ matrices, with the Lie bracket~\eqref{eq:lie_bracket}.
A  linear  map~$\rho:\gl(n,\R)\to\gl(m,\R) $
is a Lie-algebra representation if
 \be\label{eq:def_rep}
 \rho([A,B])=[\rho(A),\rho(B)]
 \ee
 for all~$A,B \in \gl(n,\R)$. In other words,~$\rho$ preserves the Lie bracket. 

\begin{example}\label{exa:kron_sum_is_repres}
 Consider~$\rho:gl(n,\R ) \to gl(n^2,\R) $  given by~$\rho(A)=A\oplus A$.  Using the mixed-product rule of the Kronecker product, that is,  
\[
(X\otimes Y)(Z\otimes W) = (XZ) \otimes (YW),
\]
(see~\cite[Chapter~4]{Horn1991TopicsMatrixAna})
and the  bilinearity of $\otimes$ yields 
\begin{align*}
[\rho(A),\rho(B)]
&= (A\otimes I_n + I_n\otimes A)(B\otimes I_n + I_n\otimes B) \\
& - (B\otimes I_n + I_n\otimes B)(A\otimes I_n + I_n\otimes A) \\
&= (AB\otimes I_n + A\otimes B + B\otimes A + I_n\otimes AB) \\
&  - (BA\otimes I_n + B\otimes A + A\otimes B + I_n\otimes BA) \\
&= (AB-BA)\otimes I_n \;+\; I_n\otimes(AB-BA) \\
&= [A,B]\otimes I_n \;+\; I_n\otimes [A,B] \\
&= \rho([A,B]), 
\end{align*}
so $\rho$ is  Lie algebra representation.
\end{example}

For our purposes, it is    important to recognize that a map~$\rho$ is a  Lie algebra representation without explicit computations as in Example~\ref{exa:kron_sum_is_repres}.
One advantage of the relation between Lie groups and Lie algebras is  that it 
allows to verify that a map~$\rho:\gl(n,\R)\to\gl(m,\R) $
is indeed a Lie  algebra representation.
Given~$R:\GL(n,\R) \to \GL(m,\R)$, 
define~$\rho:\gl(n,\R)\to \gl(m,\R)$  by
\begin{equation}\label{eq:def_rho}
    \rho (A):=\left.\frac{d}{dt}\right\vert_{t=0}R(e^{At}),
\end{equation}
If $R$ is  a  group representation, that is,  $R$ is a continuous map\footnote{The continuity here is equivalent to other regularity conditions, like differentiability and analyticity~
\cite[Ch.~5]{serre}).} satisfying 
\be\label{eq:gr_rep}
R(gh)=R(g)R(h) \text{ for all } g,h\in\GL(n,\R)
\ee
then~$\rho$ is a Lie algebra representation
(see e.g.~\cite{Hall_book_Lie_algebras}).

\begin{example}\label{exa:oplus_lie_aleg}
    As a simple example of these notions, that already points to the main ideas  in this paper, consider
the map~$R:\GL(n,\R)\to\GL(n^2,\R)$ defined by
$R(g): =g\otimes g$, where~$\otimes $ is the Kronecker product.
By the mixed product property of the Kronecker product,
\begin{align*}
    R(gh)&= (gh)\otimes(gh) \\
         &=   (g\otimes g)(h\otimes h)\\
         &= R(g)R(h),
\end{align*}
so~$R$ is a representation   of the group~$\GL(n,\R)$.
Let
\begin{align*}
    \rho(A) &:= \left.\frac{d}{dt}\right\vert_{t=0}R(e^{At})\\
   &=  \left.\frac{d}{dt}\right\vert_{t=0} ( e^{At}\otimes e^{At})\\
   &=A  \otimes I_n+I_n \otimes A\\
   &=A\oplus A,
\end{align*}
 so we conclude that the Kronecker sum is a Lie algebra  representation.
%%%%%%%%%%%%%%%
\end{example}

\begin{Remark}\label{rem:gropu_I}
%%%%%%    
Suppose that~$R:\GL(n,\R) \to \GL(m,\R)$ is a group representation. Then~\eqref{eq:gr_rep} implies that~$R$ maps~$I_n$ to~$I_m$.
If~$h\in\R^{n\times n}$ is nonsingular then
\begin{align*}
    I_m&=R(I_n)\\
    &=R(hh^{-1})\\
    &=R(h)R(h^{-1}),
\end{align*}
so~$(R(h))^{-1}=R(h^{-1})$. 
This implies that for any~$g\in\GL(n,\R)$ we have
\[
R(hgh^{-1})=R(h) R(g) (R(h))^{-1},
\]
%%%%
so in order to understand the spectral structure of~$R(g)$ it is sufficient 
to analyze the case where~$g$ is diagonal. 
%%%%
\end{Remark}

\section{Main results}\label{sec:main}
%%%%%%%%%%%%%%%%%%%%%%%%%%%%%%%%%%%%%
%
This section includes our main results. We begin by 
 introducing
a new definition.
%%%%%%%%%%%%%%%%%%
\begin{Definition}\label{def:guar_rep}
%%%%%%%%%%%%%%%%%%%%%%
A map~$\rho:\gl(n,\R)\to\gl(m,\R) $  is called a  \emph{guardian representation} 
if it is a Lie algebra representation and 
$
\det(\rho(A)) $
satisfies~\eqref{eq:defn} or~\eqref{eq:defm}.
%%%%%%%%%%%%%%%%%%%%%%%
\end{Definition}

In other words,~$\rho$ is a Lie algebra representation, and its determinant
can be used as a guardian map.

\begin{Proposition}\label{prop:2comp}
%%%%%%%%%%%%%%%%%%%%%%%%%%%%%%%%%%%%%%%
   The map~$\rho: \R^{n\times n} \to \R^{\binom{n}{2}\times\binom{n}{2}} $  defined  by~$\rho(A):=A^{[2]}$  is a guardian representation.
%%%%%%%%%%%%%%%
\end{Proposition}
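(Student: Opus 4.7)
The plan is to verify the two requirements of Definition~\ref{def:guar_rep} separately: first, that $A\mapsto A^{[2]}$ is a Lie algebra representation, and second, that $\det(A^{[2]})$ satisfies~\eqref{eq:defm}.

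For the Lie algebra property, I would lift the statement to the group level exactly as in Example~\ref{exa:oplus_lie_aleg}. Define $R:\GL(n,\R)\to\GL(\binom{n}{2},\R)$ by $R(g):=g^{(2)}$. The Cauchy--Binet identity~\eqref{eq:cbin} immediately gives $R(gh)=R(g)R(h)$, and applied to $g^{-1}g=I_n$ it also shows that $R(g)^{-1}=R(g^{-1})$, so that $R$ actually maps into the general linear group. Continuity (in fact, polynomiality) is clear from Definition~\ref{def:MC}. Hence $R$ is a group representation, so by~\eqref{eq:def_rho} the induced map
\[
\tilde\rho(A):=\left.\frac{d}{dt}\right\vert_{t=0} (e^{At})^{(2)}
\]
is a Lie algebra representation. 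It remains to identify $\tilde\rho(A)$ with $A^{[2]}$. Since $e^{At}=I_n+tA+O(t^2)$, and since the map $M\mapsto M^{(2)}$ is polynomial (hence smooth) at $M=I_n$, the chain rule shows that $\tilde\rho(A)$ depends only on the linear part of the curve $t\mapsto e^{At}$ at $t=0$. This linear part coincides with that of $\varepsilon\mapsto I_n+\varepsilon A$, so by Definition~\ref{def:AC} we get $\tilde\rho(A)=A^{[2]}$.

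For the guardian condition, I would invoke the additive spectral property of $A^{[2]}$ stated just after Definition~\ref{def:AC}: if $\lambda_1,\dots,\lambda_n$ are the eigenvalues of $A$, then
\[
\det(A^{[2]})=\prod_{1\leq i<j\leq n}(\lambda_i+\lambda_j).
\]
If $\Re(\lambda_i)<0$ for every $i$, then every factor has strictly negative real part, so none of them vanishes and $\det(A^{[2]})\neq 0$. Conversely, suppose $A$ has a purely imaginary eigenvalue $i\beta$ with $\beta\neq 0$; since $A$ is real, its complex spectrum is closed under conjugation, so $-i\beta$ is also an eigenvalue. Then the factor corresponding to this conjugate pair is $i\beta+(-i\beta)=0$, so $\det(A^{[2]})=0$. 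This is exactly~\eqref{eq:defm}.

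There is no real obstacle here; the only subtle point is the smoothness/derivative argument identifying $\tilde\rho(A)$ with $A^{[2]}$, which hinges on the observation that the differential of the polynomial map $M\mapsto M^{(2)}$ at $M=I_n$ sees only the first-order term of any smooth curve through the identity. Once this is noted, the proof is essentially a synthesis of Cauchy--Binet and the additive spectral property recalled in the preliminaries.
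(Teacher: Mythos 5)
Your proposal is correct and takes essentially the same route as the paper: lift via $R(g):=g^{(2)}$, use Cauchy--Binet to verify $R$ is a group representation, differentiate at the identity to recover $A^{[2]}$, and then invoke the additive spectral property to verify condition~\eqref{eq:defm}. You spell out two steps the paper leaves implicit (that the derivative at $t=0$ sees only the linear part of $e^{At}$, and that a purely imaginary eigenvalue of a real matrix comes paired with its conjugate so the factor $i\beta+(-i\beta)$ vanishes), but these are the same ideas, just written out more fully.
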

\begin{proof}
The~$2$-additive compound of~$A$  satisfies  
\be\label{eq:defadd}
A^{[2]}:=\frac{d}{dt}\mid_{t=0}(e^{At})^{(2)} 
\ee
(see Definition~\ref{def:AC}). 
%%%%%% 
The eigenvalues of~$A^{[2]}$ are the $\binom{n}{2}$
sums
\[
\lambda_i(A)+\lambda_j(A), \quad 1\leq i<j\leq n 
\]
(see, e.g., \cite{comp_long_tutorial}).
This implies that~$ \det(A^{[2]})$ satisfies the condition in Eq.~\eqref{eq:defm}. 
Define~$R:\GL(n,\R) \to \GL(\binom{n}{2},\R ) $ by~$R(g):=A^{(2)}$. Eq.~\eqref{eq:cbin} implies that~$R$ is a group representation and thus~\eqref{eq:defadd}
 implies that~$\rho(A)$ is a Lie algebra representation  (see also~\cite{compound_lies}, \cite{serre}), and thus a guardian representation. 
%%% 
\end{proof}

Using~$\det(A^{[2]})$ as a guardian map for  structured  systems requires an expression for  the second additive compound of a block matrix. These are also important for the analysis of  2-contraction~\cite{kordercont} of non-linear feedback systems, and explicit expressions  have recently been derived in~\cite{ofir_suff_comp,Angeli2025smallgain}.
 %%%%%%%%

The next result characterizes basic transformations that preserve guardian representations.
\begin{Proposition}\label{prop:2}
Suppose that~$\rho:\gl(n,\R)\to\gl(m,\R) $  is  a  guardian representation.
Then the following mappings are also
guardian representations:
\begin{enumerate}
    \item $\tilde \rho (A):=T\rho(A)T^{-1}$, where $T\in\R^{m\times m}$ is an invertible matrix.
    In other words, a  representation that is isomorphic   to a guardian  representation  is a guardian representation.
    \item $\hat \rho(A):=(\rho(-A))^\top$. In other words, a  representation that is contragradient    to a guardian  representation  is a guardian representation. 
\end{enumerate}
%%%%%%%
\end{Proposition}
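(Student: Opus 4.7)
The plan is to verify, for each of the two constructions, the two conditions that define a guardian representation in Definition~\ref{def:guar_rep}: (i) that the map is a Lie algebra representation, i.e., preserves the bracket~\eqref{eq:def_rep}, and (ii) that its determinant vanishes on exactly the same set of matrices as $\det(\rho(A))$, so that the guardian condition~\eqref{eq:defn} or~\eqref{eq:defm} is inherited from $\rho$.

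For part~(1), the Lie bracket identity follows immediately by inserting $T^{-1}T = I_m$ between the two factors in $\tilde\rho(A)\tilde\rho(B) - \tilde\rho(B)\tilde\rho(A)$ and using that $\rho$ itself respects the bracket. For the determinant condition, multiplicativity of $\det$ gives
\[
\det(\tilde\rho(A)) = \det(T)\det(\rho(A))\det(T^{-1}) = \det(\rho(A)),
\]
so $\det(\tilde\rho(A))$ and $\det(\rho(A))$ vanish on exactly the same matrices $A$; thus $\tilde\rho$ inherits whichever of~\eqref{eq:defn},~\eqref{eq:defm} is satisfied by $\rho$.

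For part~(2), I would first observe that since $\rho$ is linear, $\hat\rho(A) = (\rho(-A))^\top = -\rho(A)^\top$. Then a short computation gives
\[
\hat\rho([A,B]) = -\rho([A,B])^\top = -\bigl(\rho(A)\rho(B) - \rho(B)\rho(A)\bigr)^\top = \rho(A)^\top\rho(B)^\top - \rho(B)^\top\rho(A)^\top,
\]
which coincides with $[\hat\rho(A),\hat\rho(B)] = [-\rho(A)^\top, -\rho(B)^\top]$, so the bracket is preserved. For the determinant condition, using $\det(X^\top) = \det(X)$,
\[
\det(\hat\rho(A)) = \det\bigl(-\rho(A)^\top\bigr) = (-1)^m \det(\rho(A)),
\]
which has the same zero set as $\det(\rho(A))$, so the guardian property transfers verbatim.

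No step here is a real obstacle; the only point requiring mild care is getting the sign conventions right in~(2), in particular recognizing that linearity of $\rho$ turns the apparent nonlinear transformation $A \mapsto -A$ into a simple sign, and that this sign is inconsequential for the vanishing locus of $\det(\hat\rho(A))$. Everything else is routine matrix algebra.
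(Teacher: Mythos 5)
Your proof is correct and follows essentially the same route as the paper: for part (1), inserting $T^{-1}T$ to verify bracket preservation and invoking similarity-invariance of the determinant; for part (2), unfolding the bracket of $\hat\rho$ and noting that the guardian zero set is unchanged. The only superficial difference is in part (2), where you verify the guardian condition via the explicit identity $\det(\hat\rho(A)) = (-1)^m\det(\rho(A))$, whereas the paper argues through the spectrum ($\hat\rho(A)$ has eigenvalues $-\lambda_i(\rho(A))$); both observations yield the same conclusion that $\det(\hat\rho(\cdot))$ and $\det(\rho(\cdot))$ have identical zero sets.
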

\begin{proof}
%%%%
Eq.~\eqref{eq:lie_bracket} implies that if~$H $ is invertible then
\begin{align*}
[HAH^{-1},HBH^{-1}]&=H  A H^{-1} H BH^{-1} -HBH^{-1} HA H^{-1}\\
&=H[A,B]H^{-1},
\end{align*}
and thus if~$\rho$ is a Lie-algebra representation then  so is~$\tilde \rho (A):=T\rho(A)T^{-1}$. Since the determinant is invariant under a similarity transformation, this implies that if~$\rho$ is a guardian  representation
then so is~$\tilde \rho$.
%%%

To prove that~$\hat \rho$ is a Lie-algebra representation, note that
\begin{align*}
  \hat \rho([A,B] )& = ( \rho (-[A,B])  )^\top \\
 % &=  ( \rho ([B,A])  )^\top \\
%  &= ( [\rho(B),\rho(A) ]  )^\top \\
  &= -( [\rho(A),\rho(B) ]  )^\top \\
  &=[\hat \rho(A),\hat \rho (B)] ,
\end{align*}
%%%
so~$\hat \rho$ is also a representation. Since the eigenvalues of~$-A^\top$ are~$-\lambda_i(A)$, $\hat \rho$ also satisfies the guardian map condition.
%%%%%%%%%%%%
\end{proof}

\subsection{Known guardian maps are guardian representations}
\label{sec:previous}
%%%%%%%%%%%%%%%
We already showed that~$\rho(A):=A\oplus A$ preserves the Lie bracket, and thus it is a guardian representation. 
More generally, we now show that all the guardian maps suggested in the past for guarding Hurwitz matrices  are in fact
guardian representations, 
and thus the  new
notion in Definition~\ref{def:guar_rep}
provides a unifying  framework
for  all those  guardian maps.

%%%%%
\subsubsection{The lower  Schl\"aflian
  matrix of order 2}
%%%%%
%
Let~$z\in\R^n$ and fix an integer~$p\geq 1$. Let $s_p(z)$ denote the vector  formed by listing in a lexicographical ordering all the  terms
\[
z_1^{p_1}\dots z_n^{p_n} \text{ with } p_i\in\{0,1,\dots,n\} ,\; \sum_{i=1}^n p_i=p.  
\]
Thus, the dimension of~$s_p(z)$ is~$r_p(n):=\binom{n+p-1}{p}$. 
For example, $s_1(z)=z$, and
if~$n=2$   then
\[
s_2(z)=\begin{bmatrix}
    z_1^2& z_1 z_2& z_2^2
\end{bmatrix}^\top.
\]

Let~$A\in\R^{n\times n}$.
The associated upper 
Schl\"aflian
  matrix
of order~$p$, denoted~$U_p(A)$,  is the $r_p(n)\times r_p(n)$ matrix satisfying
\be\label{eq:pmat}
s_p(Az)=U_p(A)s_p(z) \text{ for all } z\in\R^n. 
\ee
%%%%%%%%%%%%%%%%%%%%
For example, for~$n=p=2$, 
Eq.~\eqref{eq:pmat} becomes
\[
 \begin{bmatrix}
   (a_{11} z_1 +a_{12} z_2 )^2\\ (a_{11} z_1 +a_{12} z_2) (a_{21} z_1 +a_{22} z_2 )\\ (a_{21} z_1 +a_{22} z_2 )^2
\end{bmatrix} =  U_2(A) 
\begin{bmatrix}
z_1^2\\ z_1 z_2\\ z_2^2
\end{bmatrix} , 
\]
so~$U_2(A)=\begin{bmatrix}
a_{11}^2  & 2a_{11}a_{12} & a_{12}^2\\
%%%%%
a_{11}a_{21}&a_{11}a_{22}+a_{12}a_{21}&a_{12}a_{22}\\
%%%%
a_{21}^2& 2a_{21}a_{22}& a_{22}^2
\end{bmatrix}$. As another example, suppose that~$A=\diag(\lambda_1,\dots,\lambda_n)$. Then~\eqref{eq:pmat} gives
\begin{align*}
U_p(A)s_p(z)&= s_p(\begin{bmatrix}
    \lambda_1 z_1&\dots&\lambda_n z_n
\end{bmatrix}^\top)   ,
\end{align*}
so the eigenvalues of~$U_p(A)$ are
\be\label{eq:prot}
\lambda_1^{p_1}\dots \lambda_n^{p_n} \text{ with } p_i\in\{0,1,\dots,n\}  ,\; \sum_{i=1}^n p_i=p.  
\ee
%%%%%%%%%
For example, for $n=2$ and $A=\diag(\lambda_1,\lambda_2)$,
we have
$U_{2} (A) = 
\diag(\lambda_{1}^{2}, \lambda_1 \lambda_2, \lambda_{2}^{2})$.

Given~$A\in\R^{n\times n}$,
the associated lower  
Schl\"aflian
 matrix
of order~$p$, denoted~$L_p(A)$,  is the~$r_p(n)\times r_p(n)$ matrix defined 
as follows. If
\be\label{eq:simpax}
\dot z(t)=A z(t)  
\ee
then
\be\label{eq:deflow}
\frac{d}{dt} s_p(z(t))= L_p(A) s_p(z(t)). 
\ee
The transformation from~\eqref{eq:simpax} to~\eqref{eq:deflow} is sometimes called the  power transformation~\cite{BARKIN1983303}.   Fuller~\cite{Fuller1968} refers to~$L_2(A)$ as the Lyapunov matrix associated with~$A$. 
\begin{Example}
    For~$n=p=2$, Eq.~\eqref{eq:deflow}
becomes
\[
\begin{bmatrix}
2z_1 (a_{11}z_1+a_{12}z_2)\\
z_1 (a_{21}z_1+a_{22}z_2)+
z_2 (a_{11}z_1+a_{12}z_2)\\
2z_2 (a_{21}z_1+a_{22}z_2)
\end{bmatrix}=L_2(A)
\begin{bmatrix}
    z_1^2\\z_1z_2\\z_2^2
\end{bmatrix},
\]
so
\be\label{eq:las2}
L_2(A)=\begin{bmatrix}
    2a_{11} & 2a_{12}&0 \\
    a_{21} & a_{11}+a_{22} & a_{12}\\
    0& 2a_{21} &2a_{22}
\end{bmatrix}.
\ee
\end{Example}

Eq.~\eqref{eq:deflow}
implies that
\begin{align*}
    L_p(A)s_p(z(t)) & = \frac{d}{dt} s_p(e^{At}z_0)\\
    &=\frac{d}{dt} U_p(e^{At})s_p(z_0),
\end{align*}
and setting~$t=0$ gives
\be\label{eq:nedfork}
    L_p(A)  =
    \frac{d}{dt} \mid_{t=0} U_p(e^{At}).
%%%
\ee
Thus, the relation between~$U_p(A)$ and~$L_p(A)$ is an instance of the relation between representations of Lie groups and Lie algebras.

Combining  \eqref{eq:prot} and~\eqref{eq:nedfork}   implies that the eigenvalues of~$L_p(A) $ are 
\[
\lambda_1 (A){p_1}+\dots+ \lambda_n (A) {p_n} \text{ with } p_i\in\{0,1,\dots,n\} ,\; \sum_{i=1}^n p_i=p.  
\]
For example, for~$n=p=2$ and~$A=\diag(\lambda_{1},\lambda_{2})$, Eq.~\eqref{eq:las2} implies 
that~$L_2(A)=\diag(2\lambda_{1},\lambda_{1}+\lambda_{2},2\lambda_{2})$. 

We conclude in particular   
that~$\det(L_2(A))$ satisfies~\eqref{eq:defn}, so~$\rho (A)=L_2(A)$ is a guardian map. 
Note that~$L_2(A) $ has dimensions~$\binom{n+1}{2}\times \binom{n+1}{2}$, whereas
the matrix~$A\oplus A$ has dimensions~$n^2\times n^2$. 

\begin{Proposition}\label{prop:lower}
 The map $\rho(A):=L_2(A)$ is a guardian representation.
\end{Proposition}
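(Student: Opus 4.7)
The plan is to verify the two conditions in Definition~\ref{def:guar_rep}: namely that $\rho(A) := L_2(A)$ preserves the Lie bracket, and that $\det L_2(A)$ satisfies the guardian map condition~\eqref{eq:defn}. For the second condition, essentially all the work has already been done in the paragraphs preceding the proposition, so the main technical point to establish is the Lie-algebra homomorphism property.

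For the Lie-algebra representation property, the plan is to use Eq.~\eqref{eq:nedfork}, which expresses $L_p(A) = \tfrac{d}{dt}\big|_{t=0} U_p(e^{At})$. By the general principle recorded around Eq.~\eqref{eq:def_rho}, it therefore suffices to check that $U_p: \GL(n,\R) \to \GL(r_p(n),\R)$ is a group representation. This follows from the defining relation~\eqref{eq:pmat}: for any invertible $A, B$ and any $z \in \R^n$, we have on the one hand $s_p((AB)z) = U_p(AB)\, s_p(z)$, and on the other $s_p(A(Bz)) = U_p(A)\,s_p(Bz) = U_p(A)\,U_p(B)\, s_p(z)$. Since $z$ is arbitrary and the entries of $s_p(z)$ span the ambient space $\R^{r_p(n)}$, this forces $U_p(AB) = U_p(A)\,U_p(B)$. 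Continuity of $U_p$ is immediate because its entries are polynomials in those of $A$. Thus $U_p$ is a group representation, and then~\eqref{eq:nedfork} together with the principle from~\eqref{eq:def_rho} shows that $L_2$ is a Lie-algebra representation.

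For the guardian condition, the plan is to invoke the eigenvalue formula derived just before the proposition: for $p = 2$, the eigenvalues of $L_2(A)$ are the $\binom{n+1}{2}$ numbers $2\lambda_i(A)$ for $1 \leq i \leq n$ together with $\lambda_i(A) + \lambda_j(A)$ for $1 \leq i < j \leq n$. If $A$ is Hurwitz then every such combination has strictly negative real part, so $\det L_2(A) \neq 0$. Conversely, if $A$ has a zero eigenvalue then $2 \cdot 0 = 0$ is among the eigenvalues of $L_2(A)$, and if $A$ admits a purely imaginary eigenvalue $i\beta$ with $\beta \neq 0$, then $-i\beta$ is also an eigenvalue of the real matrix $A$, so that $i\beta + (-i\beta) = 0$ appears in the spectrum of $L_2(A)$. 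In either case $\det L_2(A) = 0$, so $L_2$ satisfies~\eqref{eq:defn}.

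I do not foresee any real obstacle: the content of the proof is essentially repackaging the identity $s_p(ABz) = s_p(A(Bz))$ as a group-homomorphism statement and then differentiating. The one minor point to be careful about is justifying that $U_p(AB) = U_p(A) U_p(B)$ as matrix identities from the relation holding for all $z$, which is why the lexicographic vectors $s_p(z)$ span $\R^{r_p(n)}$ as $z$ varies (this can be checked by evaluating on standard basis vectors of $\R^n$ and their sums).
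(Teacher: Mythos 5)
Your proof is correct and follows essentially the same route as the paper's: show $U_p(AB)=U_p(A)U_p(B)$ directly from the defining relation~\eqref{eq:pmat} and associativity, then differentiate via~\eqref{eq:nedfork} to conclude that $L_2$ is a Lie-algebra representation. You are somewhat more careful than the paper in explicitly noting that the spanning of $\{s_p(z)\}$ justifies the matrix identity $U_p(AB)=U_p(A)U_p(B)$, and in re-verifying the guardian eigenvalue condition inside the proof rather than relying on the preceding discussion.
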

    
\begin{proof}
%%%%%%
Fix~$A,B\in\R^{n\times n} $. By~\eqref{eq:pmat},
$s_p(ABz)=U_p(AB)s_p(z)$, and also
\begin{align*}
s_p(ABz)&=s_p(A(Bz))\\
&=U_p(A)s_p(Bz)\\&
=U_p(A)U_p(B)s_p(z),
%%%%%
\end{align*}
so~$U_p(AB)=U_p(A)U_p(B)$. 
This implies in particular
that the map~$R:GL(n,\R)\to G(r_2(n),\R)$ defined by~$R(g):=U_2(g)$
is a group
representation. Now 
\begin{align*}
 %%%%%%%%%%%   
    \frac{d}{dt} \mid_{t=0} R(e^{At}) 
    &= 
    \frac{d}{dt} \mid_{t=0} U_2(e^{At}) \\
    &=
   L_2(A),
\end{align*}
where the last equation follows from~\eqref{eq:nedfork},  so~$L_2(A)$ is a Lie algebra representation, and this completes the proof of Prop.~\ref{prop:lower}.
%%%%%%
\end{proof}

 \subsubsection{The bialternate sum}
 %%%%%%%%%%
The bialternate sum~\cite{Fuller1968} of a matrix~$A\in\R^{n\times n}$ with itself,
denoted~$A \diamondplus A$, 
is an~$\frac{n(n-1)}{2}\times \frac{n(n-1)}{2}$ matrix
defined as follows (see, e.g.~\cite{MUSTAFA1995}). 
Denote by~$L(n)$ the ordered list of pairs
\[
\{(2,1),(3,1),\dots,(n,1),(3,2),(4,2),\dots,(n,2),\dots,(n-1,n-2),(n,n-2),(n,n-1)\}.
\]
Fix indexes~$x,y\in\{1,\dots, \frac{n(n-1)}{2}\}$. Let~$(p,q)$ [$(r,s)$] be the pair in position~$x$ [$y$] in the list~$L(n)$. Then entry~$(x,y)$ of~$A\diamondplus A$ is
\be\label{eq:xylist}
a_{pr}\delta_{qs}+a_{qs} \delta_{pr}-a_{ps}\delta_{qr}-a_{qr}\delta_{ps},
\ee
where~$\delta_{ij}=1$ if~$i=j$ and zero, otherwise. 

For example, when~$n=2$ the list~$L(2)$ includes the single pair
$(2,1)
$, 
and~$A\diamondplus A $ is the scalar 
corresponding to~$(p,q)=(r,s)=(2,1)$, that is, 
\[
a_{22}\delta_{11}+a_{11} \delta_{22}-a_{21}\delta_{12}-a_{12}\delta_{21}
=a_{22}+a_{11}.
\]
 
The eigenvalues of~$A\diamondplus A$ are the sums
\[
\lambda_i(A)+\lambda_j(A),\quad 1\leq i<j\leq n,
\]
and thus~$\det(A\diamondplus A)$ is a guardian map satisfying~\eqref{eq:defm}. 

To the best of our knowledge, the   next
result is new.
%%%%
\begin{Proposition}\label{prop:new}
%%%%%%%
Fix~$A\in\R^{n\times n}$. Then 
     \[
    A\diamondplus A=  A^{[2]} .
    \]
\end{Proposition}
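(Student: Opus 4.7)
The plan is to compare the two matrices entrywise. Both $A\diamondplus A$ and $A^{[2]}$ are $\binom{n}{2}\times \binom{n}{2}$ matrices whose entries are linear in $A$ (the former by inspection of \eqref{eq:xylist}, the latter by the additive property of the second additive compound). So it suffices to establish a matching explicit formula for the entries of each and compare them.

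First I would derive an explicit entry formula for $A^{[2]}$. Index the rows and columns by pairs $(i,j)$ with $i<j$, corresponding to the standard basis $\{e_i\wedge e_j:\,i<j\}$ of $\bigwedge^{2}\R^n$. From Definition~\ref{def:AC} (applied to $I_n+\varepsilon A$ acting on the exterior square), one obtains the Leibniz-type identity
\[
A^{[2]}(e_i\wedge e_j)=(Ae_i)\wedge e_j+e_i\wedge(Ae_j)=\sum_{k}a_{ki}(e_k\wedge e_j)+\sum_{k}a_{kj}(e_i\wedge e_k).
\]
Using $e_k\wedge e_k=0$ and $e_k\wedge e_\ell=-e_\ell\wedge e_k$ to reorder into the standard basis, the coefficient of $e_p\wedge e_q$ (with $p<q$) is read off by a short case analysis, yielding: $a_{ii}+a_{jj}$ on the diagonal; $+a_{qj}$ when $p=i$ and $q\neq j$; $+a_{pi}$ when $q=j$ and $p\neq i$; $-a_{qi}$ when $p=j$; $-a_{pj}$ when $q=i$; and $0$ when $\{p,q\}\cap\{i,j\}=\emptyset$.

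Next I would unpack \eqref{eq:xylist}. In the list $L(n)$, pairs are written with the first component larger than the second, so re-index each pair $(p,q)\in L(n)$ as $(\bar p,\bar q)=(q,p)$ with $\bar p<\bar q$. Substituting into \eqref{eq:xylist} gives the bialternate entry at position $((\bar p,\bar q),(\bar i,\bar j))$ as
\[
a_{\bar q\bar j}\,\delta_{\bar p\bar i}+a_{\bar p\bar i}\,\delta_{\bar q\bar j}-a_{\bar q\bar i}\,\delta_{\bar p\bar j}-a_{\bar p\bar j}\,\delta_{\bar q\bar i}.
\]
Now compare case by case with the $A^{[2]}$ entries listed above: (a) if $(\bar p,\bar q)=(\bar i,\bar j)$, the first two Kronecker deltas each contribute $1$ and the last two vanish (since $\bar p<\bar q$), giving $a_{\bar i\bar i}+a_{\bar j\bar j}$; (b) if exactly one index is shared, only one of the four deltas is nonzero, and a direct check in each of the four sub-cases (common index being the smaller or larger in each pair) yields the same signed off-diagonal entry predicted by the $A^{[2]}$ formula; (c) if no index is shared, all four deltas vanish, matching the zero entry of $A^{[2]}$.

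The main technical obstacle is simply the bookkeeping: keeping the two index conventions aligned (descending pairs in $L(n)$ versus ascending pairs on $\bigwedge^2\R^n$) and verifying that the two minus signs in \eqref{eq:xylist} correspond precisely to the two cases in which the wedge reordering of $A^{[2]}$ produces a sign change, namely when the shared index occupies the first position in one pair and the second in the other. Once the four off-diagonal sub-cases are matched and the diagonal and vacuous cases are handled, the identity $A\diamondplus A=A^{[2]}$ follows.
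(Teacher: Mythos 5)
Your proposal is correct and takes essentially the same route as the paper: both proofs proceed by writing out the $\binom{n}{2}\times\binom{n}{2}$ entry formulas in Kronecker-delta form and matching them against~\eqref{eq:xylist} after reconciling the ascending versus descending pair orderings. The only cosmetic difference is that you derive the entry formula for $A^{[2]}$ from the Leibniz action on $\bigwedge^2\R^n$, whereas the paper quotes the known expression from Schwarz~\cite{schwarz1970}.
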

\begin{proof}
%%%%
We begin by recalling an explicit expression for the entries of~$A^{[2]}$. 
Since~$A^{(2)} $ (and thus~$A^{[2]})$ is based on the 2-minors of~$A$, it is natural to index the entries of~$A^{[2]}$ using a pair of row indexes and a pair of  column indexes in~$A$. 
Let~$\tilde L(n)$ denote the list of lexicographically ordered pairs
\[
\{(1,2),(1,3),\dots,(1,n),(2,3),\dots,(2,n),\dots,(n-1,n)\}.
\]
(Note that~$L(n)$ and~$\tilde L(n)$ are identical, only with an opposite ordering inside each pair.)
Fix two pairs~$(i_1,i_2)$ and~$(j_1,j_2)$ from~$\tilde L(n)$. It is known~\cite{schwarz1970} that the entry of~$A^{[2]}$ corresponding to rows [columns] 
$(i_1,i_2)$
[$(j_1,j_2)$] of~$A$ is
\[
\begin{cases} 
a_{i_1i_1}+a_{i_2i_2}, & \text{if } i_1=j_1 \text{ and } i_2=j_2,\\
%%%%
(-1)^{\ell+m} a_{i_\ell j_m}, & \text{if } (i_1,i_2) \text { and }  (j_1,j_2) \text{ share one common index and } i_\ell\not =j_m, \\
%%%
0,& \text{otherwise}. 
\end{cases}
\]
Note that we can write this as
\be
\delta_{i_1 j_1} a_{i_2 j_2}  
+\delta_{i_2 j_2} a_{ i_1 j_1}
-\delta_{i_1 j_2 } a_{ i_2 j_1} 
-\delta_{i_2 j_1} a_{i_1 j2 }. 
\ee
Comparing this with~\eqref{eq:xylist} and noting that the pairs in~$L(n)$ are ordered in the opposite ordering completes the proof of Prop.~\ref{prop:new}.
%%%%%
\end{proof}

\begin{Corollary}
    The mapping~$\rho(A)=\det(A \diamondplus A)$ is a guardian representation.
\end{Corollary}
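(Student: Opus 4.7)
The plan is to combine the two results that immediately precede the corollary. Proposition~\ref{prop:new} establishes the matrix identity $A \diamondplus A = A^{[2]}$, while Proposition~\ref{prop:2comp} already shows that the map $A \mapsto A^{[2]}$ is a guardian representation in the sense of Definition~\ref{def:guar_rep}. Interpreting the corollary's statement as referring to the matrix-valued map $\rho(A) := A \diamondplus A$ (whose determinant then serves as the actual scalar guardian map, in accordance with Definition~\ref{def:guar_rep}), the conclusion follows by a direct substitution.

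Concretely, I would first invoke Proposition~\ref{prop:new} to replace $A \diamondplus A$ by $A^{[2]}$ throughout. Since this is an equality of matrices valid for every $A \in \R^{n \times n}$, any property of the assignment $A \mapsto A^{[2]}$ as a function of $A$ transfers verbatim to the assignment $A \mapsto A \diamondplus A$. In particular, the two defining properties of a guardian representation---namely the preservation of the Lie bracket~\eqref{eq:def_rep}, and the guardian condition on $\det(\rho(A))$---both hold for $A \diamondplus A$ because Proposition~\ref{prop:2comp} has already verified them for $A^{[2]}$.

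There is essentially no obstacle to clear: the technical content was absorbed entirely into the proof of Proposition~\ref{prop:new}, which identified the bialternate sum with the second additive compound by matching the entry-wise formulas against the known description of the entries of $A^{[2]}$. The only minor subtlety worth noting is that the guardian condition being inherited is~\eqref{eq:defm} rather than~\eqref{eq:defn}, since the spectrum of $A^{[2]}$ consists only of the sums $\lambda_i(A) + \lambda_j(A)$ with $i < j$ and need not contain $\lambda_i(A)$ itself; to obtain a map satisfying the stronger form~\eqref{eq:defn} one would multiply by $\det(A)$, exactly as discussed after the definition of~\eqref{eq:defm}.
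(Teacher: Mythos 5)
Your proposal is correct and follows exactly the paper's own argument: the corollary is an immediate consequence of Propositions~\ref{prop:2comp} and~\ref{prop:new}. The additional remarks you make---that the corollary's $\rho$ should be read as the matrix-valued map $A \mapsto A \diamondplus A$ rather than its determinant, and that the guardian condition inherited is~\eqref{eq:defm} rather than~\eqref{eq:defn}---are accurate clarifications but do not change the substance of the one-line proof.
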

\begin{proof}
    This follows immediately
from the result in Props.~\ref{prop:2comp} and~\ref{prop:new}.
%%%%%
\end{proof}

%%%%%%%%%%%%%%%%%
\section{Conclusion}
%%%%%%%%%%%%%%%%%%%%%%%%
Guardian maps play an important role in  stability
analysis and    robust control theory. We showed that all known guardian maps for   continuous-time Hurwitz stability
follow from a unifying Lie-algebraic construction.
Roughly speaking,  in order for   $\det(\rho(A))$ to be guardian map it is necessary that the spectrum of~$\rho(A)$ includes (some) sums in the
form~$\lambda_i+\lambda_j$ of eigenvalues of $A$. 
We showed that this follows from the fact that for some
Lie group 
representation $R$ the spectrum of $ R(B)$,
$B\in \GL_n$, includes (some) products~$\mu_i\mu_j$ of eigenvalues of~$B$.  In this respect, it is interesting to note that
in describing the bialternate sum Fuller~\cite{Fuller1968} states: ``... it is appropriate  to begin by considering a matrix whose characteristic roots are products (rather than sums) of the characteristic roots of a matrix~$A$''.

%%%%%%%%%%%%%%%%%%%%%%%%%%%%%%%%%%%%%%%%%%%%%%%%%%
\section*{Appendix: Guardian maps via a  matrix differential equation}
%%%%%%%%%%%%%%%%%%%%%%%%%%%%%%%%%%%%%%%%%%%%
%%%%%%%%%%%%%%%%%%%%%%%%%
In this appendix, we   review a another important relation between the Kronecker sum, the second additive compound, and the lower Schl\"aflian
 matrix of order 2 that is based on a matrix ODE.
This relation  is known, but often appears without proofs (see e.g.~\cite{Angeli2025smallgain,Brockett_Roger}), so we also include the proofs.  

Fix~$A\in\R^{n\times n}$, and
consider the matrix differential  equation
    \be\label{eq:adotax}
    \frac{d}{dt} X (t)  =A X(t)+X (t)A^\top,
    \ee
with initial condition~$X(0)\in\R^{n\times n}$.
It is straightforward to verify that
the solution is
\be\label{eq:sol_matrix_ODE}
X(t)=e^{At}X(0)e^{A^\top t}.
\ee

For a matrix~$B\in\R^{n\times n}$, let~$
\operatorname{vec}(B)\in\R^{n^2}$ 
denote the column vector obtained by stacking the rows of~$B$ one after the other. For example,
 for~$A=\begin{bmatrix}
     a_{11}&a_{12}\\a_{21}&a_{22}
 \end{bmatrix}$, we have
 \[
\operatorname{vec}(A)=\begin{bmatrix}
    a_{11}&a_{12}&a_{21}&a_{22}
\end{bmatrix}^\top.
\]
Eq.~\eqref{eq:adotax} yields     
\begin{align*}
\frac{d}{dt}\operatorname{vec}( X(t)) 
&= \operatorname{vec}(AX(t)) + \operatorname{vec}(X(t)A^\top)\\
&= (I_n\otimes A)\operatorname{vec}(X(t)) + (A\otimes I_n)\operatorname{vec}(X(t))\\
&=(A\oplus A) \operatorname{vec}(X(t)).
\end{align*}
%%%
In other words, 
the dynamics of all the entries of~$X(t)$ in vectorized form  is governed by~$A\oplus A$.

We now show that if we specialize the matrix ODE to the case of symmetric [skew-symmetric] matrices then the dynamics of~$X(t)$ is governed by $L_2(A)$ [$A^{[2]}]$.

 \subsection{The symmetric case}
 %%%%%%%%%%%
 If~$X\in\R^{n\times n}$
is symmetric  then it is uniquely defined by its entries on and above its main  diagonal, that is, the~$n(n+1)/2$ scalars
\[
x_{ij} , \quad 1\leq i\leq j\leq n , 
\]
so let
\[
w(X):=\begin{bmatrix}
x_{11}&x_{12}&\dots&x_{1n}&
x_{22}& x_{23}&\dots& x_{2n} &\dots&x_{nn}    
\end{bmatrix}
^\top.
\]

\begin{Proposition}
  \label{prop:symm}
Fix~$A\in\R^{n\times n}$.         Consider the matrix differential  equation
    \eqref{eq:adotax}
    with~$X(0)$ symmetric. Then~$X(t) $ is symmetric for all~$t$, and
    \be\label{eq:symmv}
    \frac{d}{dt} w(X(t))=L_2(A) w(X(t)). 
    \ee
\end{Proposition}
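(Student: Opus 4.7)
First I would dispatch symmetry of $X(t)$ using the closed-form solution~\eqref{eq:sol_matrix_ODE}: transposing yields $X(t)^\top = e^{At} X(0)^\top e^{A^\top t}$, which equals $X(t)$ whenever $X(0)$ is symmetric. (An alternative is to differentiate $X^\top - X$ and invoke uniqueness of solutions to the matrix ODE.) This ensures that $w(X(t))$ is well defined for all $t \geq 0$.

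For~\eqref{eq:symmv}, my strategy is linearity plus a spanning set. The map $X \mapsto AX + XA^\top$ preserves $\Sym_n$ and is linear in $X$, and $w : \Sym_n \to \R^{n(n+1)/2}$ is a linear isomorphism. Hence the right-hand side of~\eqref{eq:adotax} corresponds in $w$-coordinates to \emph{some} $n(n+1)/2 \times n(n+1)/2$ matrix, and the task is to show this matrix is $L_2(A)$. Since the rank-one symmetric matrices $\{z_0 z_0^\top : z_0 \in \R^n\}$ span $\Sym_n$ by the spectral theorem, it suffices to verify~\eqref{eq:symmv} on such initial data.

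For rank-one $X(0) = z_0 z_0^\top$, the closed-form solution gives $X(t) = z(t) z(t)^\top$ with $z(t) := e^{At} z_0$ satisfying $\dot z = A z$. The key identity is
\[
w(z z^\top) = s_2(z),
\]
which follows because the $(i,j)$ entry with $i \leq j$ of $z z^\top$ is $z_i z_j$, and the lexicographic orderings defining $w$ and $s_2$ coincide index by index. Combining this with the defining property~\eqref{eq:deflow} of the lower Schl\"aflian matrix applied to $z(t)$ yields
\[
\frac{d}{dt} w(X(t)) = \frac{d}{dt} s_2(z(t)) = L_2(A)\, s_2(z(t)) = L_2(A)\, w(X(t)),
\]
which is~\eqref{eq:symmv} on rank-one data. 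Linearity then extends the identity to all symmetric $X(0)$.

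The main obstacle, such as it is, is conceptual rather than computational: recognizing that $L_2(A)$, originally defined via the scalar power-transformation dynamics, is also the generator of the matrix ODE $\dot X = A X + X A^\top$ once restricted to $\Sym_n$. Once the argument is framed via linearity and rank-one probing, the remaining work reduces to the bookkeeping identity $w(z z^\top) = s_2(z)$.
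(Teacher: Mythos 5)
Your proof is correct, and it takes a route that differs from the paper's in technique though not in essence. The paper's proof writes out the ODE satisfied by each entry $x_{pq}$ of $w(X(t))$ (with $1\le p\le q\le n$) and compares it to the ODE satisfied by the corresponding entry $z_p z_q$ of $s_2(z(t))$, observing that the two are the same linear expression in their respective coordinates, so the coefficient matrices coincide. You instead argue by linearity plus rank-one probing: since both sides of~\eqref{eq:symmv} are linear in $X$ and the rank-one matrices $z_0 z_0^\top$ span $\Sym_n$, it suffices to check on such $X(0)$, for which the closed-form solution factors as $X(t)=z(t)z(t)^\top$, and the bookkeeping identity $w(zz^\top)=s_2(z)$ then reduces the claim to~\eqref{eq:deflow}. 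Your version isolates the algebraic identity $w(zz^\top)=s_2(z)$ explicitly (the paper uses it only implicitly, via the index correspondence $x_{pq}\leftrightarrow z_p z_q$), and it also explicitly handles the symmetry-preservation part of the statement, which the paper's proof of this particular proposition leaves to the reader; the paper's version, on the other hand, is more elementary and avoids the spanning argument.
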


In other words, the solution of the   matrix ODE preserves symmetry, 
and the dynamics of the
$ n(n+1)/2$ independent entries of $X(t)$ is  governed by the  
 lower Schl\"aflian
 matrix of order 2
of $A$.

\begin{example}
Consider the case~$n=2$. Then~\eqref{eq:adotax} becomes
\be\label{eq:exhu}
\frac{d}{dt} 
\begin{bmatrix}
 x_{11}&x_{12} \\
 x_{12}& x_{22}   
\end{bmatrix}=
\begin{bmatrix}
 a_{11}&a_{12} \\
 a_{21}&a_{22}   
\end{bmatrix}\begin{bmatrix}
  x_{11}&x_{12} \\
 x_{12}& x_{22}      
\end{bmatrix} + \begin{bmatrix}
  x_{11}&x_{12} \\
 x_{12}& x_{22}   
\end{bmatrix}
\begin{bmatrix}
 a_{11}&a_{21} \\
 a_{12}&a_{22}   
\end{bmatrix}.
\ee
In this case,~$w(X) =\begin{bmatrix}
    x_{11}&x_{12}&x_{22} 
\end{bmatrix}^\top$,
and~\eqref{eq:exhu}  
gives 
\begin{align*}
%%%
\frac{d}{dt} w(X)&=\begin{bmatrix}
    2a_{11}& 2a_{12} &0\\ 
%%%%
a_{21} &a_{11}+a_{22} & a_{12}\\
%%%
0&2a_{21}&2a_{22}
%%%%%%%%%%%%%%%%
\end{bmatrix}
w(X)\\
%%%%%%%%%%%%%
&=L_2(A)w(X) , 
%%%
\end{align*}
where the last equality  follows from~\eqref{eq:las2}.
%%%%%%%
\end{example}

   \begin{proof}[Proof of Prop.~\ref{prop:symm}]
%%%%%%%%%%%%%%%%%%%%%%%%%%
Every entry of~$w(X)$ is of the form~$x_{pq}$  with~$1\leq p\leq q\leq n$. By~\eqref{eq:adotax}, 
%%%%%%%
\be\label{eq:potr}
\dot x_{pq}= \sum_{i=1}^n a_{pi} x_{iq}+\sum_{i=1}^n x_{pi} a_{qi}.
\ee
We compare this to the entry~$z_{p}z_q$ in the vector~$s_2(z)$, where~$z(t)$ satisfies~\eqref{eq:simpax}. Clearly, 
\begin{align*}
    \frac{d}{dt} (z_{p} z_q)& =  \dot z_p z_q+z_p \dot z_q \\
    &=  \sum_{i=1}^n a_{pi}z_iz_q + 
     \sum_{i=1}^n  a_{qi}z_pz_i . 
\end{align*}
%%%
We conclude that    entry~$z_pz_q$ in~$s_2(z)$  and   entry~$x_{pq}$ in~$w(X)$ satisfy the same differential equation. By~\eqref{eq:deflow}, $\frac{d}{dt} s_2(z)=L_2(A)s_2(z)$, and this completes the proof. 
%%%%%%
%%%%%%%%%%%%%%%%%%%%%%%
\end{proof}

 \subsection{The skew-symmetric case}
 %%%%%%%%%%%
Recall that   $X\in\R^{n\times n}$
is called skew-symmetric if~$X+X^\top=0$.
The diagonal entries of such a matrix are zero, and 
it is uniquely defined by its entries above the main diagonal, that is, the~$n(n-1)/2$ scalars
\[
x_{ij} , \quad 1\leq i<j\leq n.
\]
For example, for~$n=3$ we have 
\[
X=\begin{bmatrix}
    0 &x_{12} &x_{13} \\
    -x_{12}&0  &x_{23} \\
   -x_{13}  &-x_{23} &0 
%%%%%%    
\end{bmatrix}.
\]

For a skew-symmetric matrix~$X$, let
\[
v(X):=\begin{bmatrix}
    x_{1,2}& x_{1,3}& \dots & x_{1,n}& 
    x_{2,3} &x_{2,4}&\dots & x_{2,n} &  \dots & x_{n-1,n}
\end{bmatrix}^\top.
\]
 
\begin{Proposition}
  \label{prop:skew}
Fix~$A\in\R^{n\times n}$.     Consider the matrix ODE 
    \eqref{eq:adotax}
    with~$X(0)$ skew-symmetric. Then~$X(t) $ is skew-symmetric for all~$t$,
    and
    \be\label{eq:vxder}
    \frac{d}{dt} v(X(t))=A^{[2]} v(X(t)). 
    \ee
\end{Proposition}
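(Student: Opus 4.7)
The plan is to parallel the proof of Prop.~\ref{prop:symm}, with symmetric products $z_p z_q$ replaced by $2$-minors $y_p z_q - y_q z_p$, and $L_2(A)$ replaced by the second additive compound $A^{[2]}$. First, I would confirm skew-symmetry is preserved: transposing \eqref{eq:sol_matrix_ODE} gives $X(t)^\top = e^{At} X(0)^\top e^{A^\top t} = -X(t)$, so the $n(n-1)/2$ entries stored in $v(X(t))$ continue to parametrize $X(t)$ uniquely for all $t$.

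Next, I would derive the componentwise dynamics. Exactly as in \eqref{eq:potr}, for $p<q$ the matrix equation \eqref{eq:adotax} yields $\dot x_{pq} = \sum_i a_{pi} x_{iq} + \sum_i x_{pi} a_{qi}$, where now $x_{ii}=0$ and $x_{ji}=-x_{ij}$ by skew-symmetry. In parallel, for any two solutions $y(t),z(t)$ of $\dot\xi = A\xi$, set $m_{pq}(t) := y_p(t)z_q(t) - y_q(t)z_p(t)$ and extend antisymmetrically. A short differentiation (mirroring the symmetric-case computation in Prop.~\ref{prop:symm}) gives $\dot m_{pq} = \sum_i a_{pi} m_{iq} + \sum_i m_{pi} a_{qi}$, the identical ODE. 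Thus $v(X(t))$ and the $\binom{n}{2}$-vector of $2$-minors of $[y(t)\ z(t)]\in\R^{n\times 2}$ are governed by the same linear operator.

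It remains to identify this operator as $A^{[2]}$. By Cauchy-Binet \eqref{eq:cbin}, the $2$-minor vector of $[y(t)\ z(t)] = e^{At}[y(0)\ z(0)]$ equals $(e^{At})^{(2)} [y(0)\ z(0)]^{(2)}$. The group identity $(e^{A(t+s)})^{(2)} = (e^{At})^{(2)}(e^{As})^{(2)}$ combined with Def.~\ref{def:AC} yields $\frac{d}{dt}(e^{At})^{(2)} = A^{[2]}(e^{At})^{(2)}$, so the $2$-minor vector obeys $\dot w = A^{[2]} w$. Since every skew-symmetric $X(0)$ is a real linear combination of rank-$2$ pieces $yz^\top - zy^\top$ (for instance, the basis $e_ie_j^\top - e_je_i^\top$ with $i<j$), linearity of \eqref{eq:adotax} in $X(0)$ transfers this conclusion to $v(X(t))$, establishing \eqref{eq:vxder}. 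The main obstacle is the sign bookkeeping involved in rewriting $\sum_i a_{pi} m_{iq}$ as a linear combination of the basis entries $m_{p'q'}$, $p'<q'$, and verifying that the resulting matrix matches the explicit entries of $A^{[2]}$ exhibited in the proof of Prop.~\ref{prop:new}; the Cauchy-Binet step is precisely what sidesteps this computation by reading off $A^{[2]}$ as the infinitesimal generator of $(e^{At})^{(2)}$.
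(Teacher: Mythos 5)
Your proof is correct, but it takes a genuinely different route from the paper's. The paper's own proof of Prop.~\ref{prop:skew} is \emph{algebraic}: it introduces a linear isomorphism $L:\R^{\binom{n}{2}}\to\{\text{skew matrices}\}$ sending $[e^i\ e^j]^{(2)}\mapsto S_{ij}$, proves the intertwining identity $L(A^{[2]}u)=AL(u)+L(u)A^\top$ as a standalone lemma (via the Leibniz-rule expansion $A^{[2]}[e^i\ e^j]^{(2)}=[Ae^i\ e^j]^{(2)}+[e^i\ Ae^j]^{(2)}$), and then substitutes into the ODE. Your argument is instead \emph{dynamical}: you transplant the paper's own proof of the symmetric case (Prop.~\ref{prop:symm}) to the skew case, reducing to rank-two initial conditions $X(0)=y(0)z(0)^\top-z(0)y(0)^\top$, observing that the flow carries these to $y(t)z(t)^\top-z(t)y(t)^\top$ whose $v$-coordinates are exactly the $2$-minors $[y(t)\ z(t)]^{(2)}=(e^{At})^{(2)}[y(0)\ z(0)]^{(2)}$, and then invoking Cauchy--Binet to read off $A^{[2]}$ as the infinitesimal generator of the one-parameter group $(e^{At})^{(2)}$. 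Linearity in $X(0)$ then spans the result. What your approach buys is uniformity with the symmetric-case proof and the avoidance of the explicit index bookkeeping in Lemma~\ref{lem:la2}; what the paper's approach buys is that the key computational fact is isolated as a reusable algebraic lemma rather than being folded into the flow argument. At bottom the two proofs hinge on the same identity (the Leibniz rule for $A^{[2]}$ on decomposable $2$-vectors, which is what the Cauchy--Binet differentiation encodes), so they are different packagings of the same mechanism, but your version is arguably more in the spirit of the Lie-algebraic framework the paper is promoting. One small stylistic point: the paragraph where you verify componentwise that $\dot x_{pq}$ and $\dot m_{pq}$ satisfy the same recursion is redundant once you have the rank-two-plus-linearity argument; you could drop it and the proof would still close.
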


In other words, the solution of the   matrix ODE preserves skew-symmetry, 
the dynamics of all the entries of~$X(t)$ in vectorized form  is governed by~$A\oplus A$,
and the dynamics of the
$ \binom{n}{2}$ independent entries of $X(t)$ is  governed by the second additive compound $A^{[2]}$
of $A$.

\begin{example}
Consider the case~$n=2$. Then~\eqref{eq:adotax} becomes
\[
\frac{d}{dt} 
\begin{bmatrix}
 0&x_{12} \\
 -x_{12}& 0   
\end{bmatrix}=
\begin{bmatrix}
 a_{11}&a_{12} \\
 a_{21}&a_{22}   
\end{bmatrix}\begin{bmatrix}
 0&x_{12} \\
 -x_{12}& 0   
\end{bmatrix} + \begin{bmatrix}
 0&x_{12} \\
 -x_{12}& 0   
\end{bmatrix}
\begin{bmatrix}
 a_{11}&a_{21} \\
 a_{12}&a_{22}   
\end{bmatrix},
\]
\begin{comment}
    In this case,~$\operatorname{vec}(X) =\begin{bmatrix}
    0&x_{12}&-x_{12} &0
\end{bmatrix}^\top$,
and using the fact that the first and fourth entries of~$\operatorname{vec}(X)$ are zero we see that~\eqref{eq:exhu} can be written as 
\begin{align*}
%%%
\frac{d}{dt}
\operatorname{vec}(X)&=\begin{bmatrix}
    2a_{11}& a_{12} & a_{12}&0\\ 
%%%%
a_{21}&a_{11}+a_{22} &0&a_{12}\\
a_{21}&0&a_{11}+a_{22}&a_{12}\\
0&a_{21}&a_{21}&2a_{22}
%%%%%%%%%%%%%%%%
\end{bmatrix}
\operatorname{vec}(X)\\
&=(A\oplus A)\operatorname{vec}(X).
%%%
\end{align*}
\end{comment}
%
so  $\frac{d}{dt} v(X)= \dot x_{12} = a_{11}x_{12} +x_{12}a_{22}$, that is,
$\frac{d}{dt} v(X)=A^{[2]}v(X)$.
%%%%%%%%
\end{example}

\begin{proof}[Proof of Prop.~\ref{prop:skew}]
%%%%%%%%%%%%
Using~\eqref{eq:sol_matrix_ODE}
 it is easy to verify that if~$X(0)$ is skew-symmetric
then~$X(t)$ is skew-symmetric for all~$t$.
%%%%%%%%%%%%
  
In order to prove~\eqref{eq:vxder} we introduce more notation. Let~$e^1,\dots,e^n \in\R^n$ denote the canonical basis of~$\R^n$. 
A   basis for the space of $n\times n$ skew-symmetric matrices is 
the set of~$\binom{n}{2}$ matrices
\[
S_{ij} := E_{ij} - E_{ji}, \qquad 1\le i<j\le n,
\]
where $E_{ij}:=e^i (e^j)^\top$, that is,  the matrix with a \(1\) in position \((i,j)\) and zeros,  elsewhere.
Let~$L:\R^{\binom{n}{2}}\to\R^{n\times n}$ be the linear operator defined by 
\[
L \left (  \begin{bmatrix} e^i & e^j \end{bmatrix}^{(2)} \right ) =S_{ij}, \text{ for all }1\leq i<j\leq n,
\]
  where~$\begin{bmatrix} e^i & e^j \end{bmatrix}^{(2)}$ is the second multiplicative compound of
of the matrix~$\begin{bmatrix} e^i & e^j \end{bmatrix} \in\R^{n\times 2}$.

      We require an axillary result. 
\begin{Lemma}\label{lem:la2}
%%%%%%%%%%%%
    Let~$A\in\R^{n\times n}$. Then for any~$1\leq i<j\leq n$, we have
    \[
    L\left ( A^{[2]} \begin{bmatrix} e^i & e^j \end{bmatrix}^{(2)}\right  )=
    AL\left (  \begin{bmatrix} e^i & e^j \end{bmatrix}^{(2)} \right )+L \left (  \begin{bmatrix} e^i & e^j \end{bmatrix}^{(2)}\right  ) A^\top.
    \]
\end{Lemma}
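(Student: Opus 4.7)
My plan is to reduce both sides of the identity to explicit entry-wise formulas in the entries of $A$ and verify they coincide. Writing $f^{ij} := \begin{bmatrix} e^i & e^j \end{bmatrix}^{(2)}$ for brevity, I would first observe that $f^{ij}$ is simply the standard basis vector of $\R^{\binom{n}{2}}$ indexed by the pair $(i,j)$: the $2$-minor at rows $p<q$ of $\begin{bmatrix} e^i & e^j \end{bmatrix}$ equals $\delta_{ip}\delta_{jq}-\delta_{jp}\delta_{iq}$, which for $i<j$ is $1$ precisely when $(p,q)=(i,j)$ and $0$ otherwise. In particular the prescription $L(f^{ij})=S_{ij}$ determines $L$ on a basis of $\R^{\binom{n}{2}}$, and by linearity it suffices to prove the claimed identity for $v=f^{ij}$.

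Next, I would obtain a closed form for the components of $A^{[2]} f^{ij}$. By Cauchy--Binet~\eqref{eq:cbin}, $((I+tA)\begin{bmatrix} e^i & e^j \end{bmatrix})^{(2)} = (I+tA)^{(2)} f^{ij}$, and differentiating at $t=0$ yields $A^{[2]} f^{ij}$ by Definition~\ref{def:AC}. Computing the $2$-minor at rows $p<q$ of $(I+tA)\begin{bmatrix} e^i & e^j \end{bmatrix}$ directly gives
\[
(\delta_{ip}+t a_{pi})(\delta_{jq}+t a_{qj})-(\delta_{jp}+t a_{pj})(\delta_{iq}+t a_{qi}),
\]
whose derivative at $t=0$ is $a_{pi}\delta_{jq}+\delta_{ip} a_{qj}-a_{pj}\delta_{iq}-\delta_{jp} a_{qi}$. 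Since $L(f^{pq})=S_{pq}=E_{pq}-E_{qp}$ carries entry $+1$ in position $(p,q)$ for $p<q$, this quantity is exactly the $(p,q)$-entry of $L(A^{[2]} f^{ij})$ for $p<q$.

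For the right-hand side, a direct index calculation gives $(A E_{ij})_{pq}=a_{pi}\delta_{jq}$ and $(E_{ij} A^\top)_{pq}=\delta_{ip} a_{qj}$, from which
\[
(A S_{ij}+S_{ij} A^\top)_{pq}=a_{pi}\delta_{jq}-a_{pj}\delta_{iq}+\delta_{ip} a_{qj}-\delta_{jp} a_{qi}.
\]
This matches the formula from the previous paragraph for every $p<q$. Both matrices are skew-symmetric, the right-hand side because $(A S+S A^\top)^\top=-(A S+S A^\top)$ whenever $S^\top=-S$, and the left-hand side by construction of $L$, so equality on strictly upper-triangular entries upgrades to equality as matrices.

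The only real obstacle is careful bookkeeping of the four Kronecker-delta terms and their signs. There is no deeper difficulty: conceptually, the identity is the matrix translation of the standard derivation rule $A^{[2]}(u\wedge v)=Au\wedge v+u\wedge Av$ under the isomorphism $\bigwedge^{2}\R^n\to\{S\in\R^{n\times n}:S^\top=-S\}$ sending $u\wedge v\mapsto uv^\top-vu^\top$, and the index check above is the verification of this correspondence for the decomposable vector $e^i\wedge e^j$.
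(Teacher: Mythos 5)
Your proof is correct. It reaches the same conclusion but through a slightly different verification strategy than the paper. The paper's proof first invokes the derivation-type identity $A^{[2]}\begin{bmatrix} e^i & e^j \end{bmatrix}^{(2)} = \begin{bmatrix} Ae^i & e^j \end{bmatrix}^{(2)} + \begin{bmatrix} e^i & Ae^j \end{bmatrix}^{(2)}$, expands $Ae^i=\sum_k a_{ki}e^k$ and $Ae^j=\sum_k a_{kj}e^k$, uses linearity of $L$ to land on the sum $\sum_k a_{ki}(e^k(e^j)^\top - e^j(e^k)^\top)+\sum_k a_{kj}(e^i(e^k)^\top - e^k(e^i)^\top)$, and then recognizes this as $AS_{ij}+S_{ij}A^\top$ at the level of rank-one matrices. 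You instead go all the way down to scalar entries: you compute the $(p,q)$-coordinate of $A^{[2]}f^{ij}$ by differentiating the $(p,q)$-minor of $(I+tA)\begin{bmatrix} e^i & e^j \end{bmatrix}$, separately compute $(AS_{ij}+S_{ij}A^\top)_{pq}$, match the two, and close the argument by using skew-symmetry of both sides to pass from the strict upper triangle to the full matrices. Your route is fully self-contained and only needs bilinearity of the $2$-minor plus delta bookkeeping, whereas the paper's route keeps the derivation property (which you rightly flag as the conceptual content) visible as a stepping stone and avoids the explicit skew-symmetry completion step; either is a sound short proof.
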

\begin{proof}
%%%%%%%%
Note that
\begin{align*}
   L\left ( A^{[2]} \begin{bmatrix} e^i & e^j \end{bmatrix}^{(2)}\right  ) & = L\left (  \begin{bmatrix}A e^i & e^j \end{bmatrix}^{(2)} +\begin{bmatrix}  e^i & Ae^j \end{bmatrix}^{(2)}\right ) \\
   %%%
   &=  L\left(   \begin{bmatrix} \sum_{k=1}^n a_{ki} e^k & e^j \end{bmatrix}^{(2)} +\begin{bmatrix}  e^i & \sum_{k=1}^n a_{kj} e^k \end{bmatrix}^{(2)}\right) \\
   %%%%%%%%%%%%%%
   &= \sum_{k=1}^n a_{ki} L\left ( \begin{bmatrix} e^k & e^j \end{bmatrix}^{(2)}\right ) +
   \sum_{k=1}^n a_{kj} L\left ( \begin{bmatrix} e^i & e^k \end{bmatrix}^{(2)}\right  )\\
%%%%%%%%%
&=  \sum_{k=1}^n a_{ki} \left (e^k (e^j)^\top-e^j (e^k)^\top \right )+\sum_{k=1}^n a_{kj}\left  (e^i (e^k)^\top-e^k (e^i)^\top\right )\\
%%%%%%% 
   &=AS_{ij}+S_{ij}A^\top,
%%%%%%%%%%%%%%
%%%%%%%
\end{align*}
%%%%%%%%%%%%
and this completes the proof of Lemma~\ref{lem:la2}. 
\end{proof}

We can now complete the proof of  Prop.~\ref{prop:skew}. 
 Eq.~\eqref{eq:adotax} can be written as 
\[
\frac{d}{dt} \sum_{i<j} x_{ij}(t) S_{ij}  = A\sum_{i<j} x_{ij}(t) S_{ij}+\sum_{i<j} x_{ij}(t) S_{ij}A^\top,
\]
that is,
\[ \sum_{i<j} \dot x_{ij}(t) L\left (\begin{bmatrix}
    e^i&e^j
\end{bmatrix}^{(2)}\right )  =  \sum_{i<j} x_{ij}(t) A L\left (\begin{bmatrix}
    e^i&e^j
\end{bmatrix}^{(2)}\right )
+\sum_{i<j} x_{ij}(t) L\left (\begin{bmatrix}
    e^i&e^j
\end{bmatrix}^{(2)}\right ) A^\top.
\]
Applying Lemma~\ref{lem:la2} gives
\begin{align*}
%%%
\sum_{i<j} \dot x_{ij}(t) L\left(\begin{bmatrix}
    e^i&e^j
\end{bmatrix}^{(2)}\right ) = \sum_{i<j}    L\left( A^{[2]} \begin{bmatrix} e^i & e^j \end{bmatrix}^{(2)} \right )x_{ij}(t)  , 
%%%
\end{align*}
and this completes  the proof of Prop.~\ref{prop:skew}.
\end{proof}

\end{document}